\newtheorem{lemma}{LEMMA}[section]
\newtheorem{proposition}[lemma]{PROPOSITION}
\newtheorem{corollary}[lemma]{COROLLARY}
\newtheorem{theorem}[lemma]{THEOREM}
\newtheorem{remark}[lemma]{REMARK}
\newtheorem{examples}[lemma]{EXAMPLES}
\newcommand{\real}{\mathbbm{R}}
\newcommand{\nat}{\mathbbm{N}}
\renewcommand{\a}{\alpha}
\renewcommand{\b}{\beta}
\newcommand{\g}{\gamma}
\newcommand{\ve}{\varepsilon}
\newcommand{\reald}{{\real^d}}
\newcommand{\und}{\quad\mbox{ and }\quad}
\newcommand{\inv}{^{-1}}
\newcommand{\ov}{\overline}
\newcommand{\W}{\mathcal W}  
\newcommand{\C}{\mathcal C}  
\newcommand{\F}{\mathcal F}
\renewcommand{\H}{{\mathcal H}}
\newcommand{\B}{\mathcal B}
\newcommand{\M}{\mathcal M}
\newcommand{\osc}{\operatorname*{osc}}
\newcommand{\itemframe}%
{\setlength{\parskip}{10pt}\begin{enumerate} \setlength{\topsep}{10pt}%
\setlength{\itemsep}{15pt}\setlength{\parsep}{5pt}}
\newcommand{\mx}{\mu_x}
\newcommand{\my}{\mu_y}
\newcommand{\vx}{\ve_x}
\newcommand{\Uo}{\mathcal U_0}
 \title{Intrinsic  H\"older continuity of harmonic functions}
\author{Wolfhard Hansen} 
\date{}
\begin{document}
\maketitle

\begin{abstract}
In a setting, where only ``exit measures'' are given,  as they are  associated with a  right continuous 
strong Markov process on  a separable metric space, we provide simple criteria for scaling invariant H\"older 
continuity of bounded harmonic functions 
with respect to a distance function which, in applications, may  be adapted to the special situation.
In particular, already a~very weak scaling property ensures that Harnack inequalities imply H\"older continuity.
Our approach covers recent results by M.\ Kassmann and A.\ Mimica as well as cases, where a Green function
leads to an intrinsic metric.

Keywords:   Harmonic function;  H\"older continuity; right process; balayage space; L\'evy process.  
 
 MSC:   31D05, 60J25, 60J45,  60J75.
  \end{abstract}

 \section{Harmonic functions in a general  setting}

During the last years,  H\"older continuity of bounded harmonic functions 
has  been studied for various classes of Markov processes (see \cite{kassmann-mimica-final,H-hoelder} 
and the references therein). The aim of this paper  
is   to offer not only a~unified (and perhaps more transparent) approach   to  results obtained until now, 
but also the possibility for   applications  in new cases.
\footnote{The final publication is available at Springer via http://dx.doi.org/10.1007/s1118-016-9604-8.}

Let $X$ be a topological space such that   finite measures~$\mu$ on
its $\sigma$-algebra~$\B(X)$ of Borel subsets satisfy 
\[
\mu(A)=\sup\{\mu(F)\colon F \mbox{ closed, } F\subset A\}, \qquad A\in \B(X).
\]
This holds if $X$ is a separable metric space
(on its completion   every finite measure is tight). 
Let $\M(X)$ denote the set of all finite measures on $(X,\B(X))$
(which we also consider as measures on the $\sigma$-algebra $\B^\ast(X)$ of all
universally measurable sets).  
Given a~set $\F$ of numerical functions on~$X$, let~$\F_b$, $\F^+$ be the set of all functions in~$\F$
which are bounded, positive respectively.

For great flexibility in applications, we consider an open neighborhood $X_0$ of a~point $x_0\in X$
and suppose that we have  a continuous real function $\rho_0\ge 0$ on $X_0$
with $\rho_0(x_0)=0$ and $0<R_0\le \infty$ such that, for every $0<r<R_0$, the closure
of 
\begin{equation*}
    U_r:=\{x\in X\colon \rho_0(x)<r\}                   
\end{equation*} 
 is contained in $X_0$.
Let $\Uo$ denote the set of all open sets $V$ in $X$ with $V\subset U_r$ for some~$0<r<R_0$. 

We suppose that we have    measures~$\mu_x^U\in\M(X)$, $x\in X$,  $U\in \Uo$,  such that the following hold
for all $x\in X$ and $U,V \in \Uo$ (where $\vx$ is  the  Dirac measure at~$x$):
{\it
\begin{itemize}
\item[\rm(i)]
 The measure $\mu_x^U$ is supported by $U^c$, and $\mu_x^U(X)=1$.  If $x\in U^c$, then $\mu_x^U=\ve_x$.
\item[\rm(ii)]
The  functions $y\mapsto \mu_y^U(E)$, $E\in\B(X)$, are universally measurable on $X$ and
\begin{equation}\label{it-bal} 
\mu_x^U=(\mu_x^{V})^U := \int \mu_y^U\,d\mu_x^{V}(y), \qquad \mbox{ if } V\subset U.
\end{equation} 
\end{itemize} 
}

Let us note that, having (i),  the equality in  (\ref{it-bal}) amounts to 
\begin{equation}\label{it-bal-det}
    \mu_x^U=\mu_x^V|_{U^c} + \int_U \mu_y^U\,d\mu_x^V(y). 
\end{equation} 

Of course, stochastic processes and  potential theory  abundantly provide examples (with $X_0=X$,
 $\rho_0(x)=\rho(x,x_0)$, where $\rho$ is \emph{any}  metric for the topology of~$X$).

\begin{examples}\label{hunt-bal}{\rm
1. Right process $\mathfrak X$ with strong Markov property  on a Radon space $X$,
\[
    \tau_U:=\inf\{t\ge 0\colon X_t\in U^c\}< \infty \mbox{\ $\mathbbm P^x$-a.s.} \und \mu_x^U(E):=P^x[X_{\tau_U}\in E]
\]
for all $U\in \Uo$, $x\in X$, $E\in \B(X)$  (\cite[Propositions 1.6.5 and 1.7.11, Theorem~1.8.5]{beznea-boboc}).

If $U,V\in \Uo$ with $V\subset U$, then $\tau_U=\tau_V+\tau_U\circ \theta_{\tau_V}  $,
and hence, by the strong Markov
property, for all $x\in X$ and $E\in \B(X)$, 
\begin{equation*}  
\mx^U(E)=\mathbbm P^x[X_{\tau_U}\in E]=\mathbbm E^x\left(\mathbbm
P^{X_{\tau_V}} [X_{\tau_U}\in E]\right)=\int \my^U(E)\,d\mx^V(y). 
\end{equation*}

2. Balayage space $(X,\W)$ (see {\rm \cite{BH}})  such that $1\in \W$,  
\[
               \int v\,d\mu_x^U= R_v^{U^c}(x):=\inf\{ w(x)\colon w\in \W,\ w\ge v \mbox{ on } U^c\}, \qquad v\in \W.
\] 
The properties {\rm(i)} and {\rm(ii)} follow from \cite[VI.2.1, 2.4, 2.10, 9.1]{BH}. 
}
\end{examples}

Given $U\in \Uo$, let  $\H(U)$ denote the set of all universally measurable real  functions~$h$ on~$X$ 
which are \emph{harmonic on $U$}, that is, for all   open
sets $V$ with $\ov V\subset U$ and all~$x\in V$, are  $\mu_x^V$-integrable and satisfy
\begin{equation}\label{har-def}
                     \int h\,d\mu_x^V=h(x).
\end{equation}
Obviously, constant functions are harmonic on $U$ and,  for every bounded Borel measurable function   $f$
on $X$, the function $x\mapsto \int f\,d\mu_x^U$ is harmonic on U, by (\ref{it-bal}).
The latter even holds for every bounded \emph{universally} measurable function~$f$ on~$X$ (see
\cite[Section 2]{HN-scaling-harnack}).

\section{Main result}

Our aim is to obtain criteria for the following scaling invariant H\"older
continuity of bounded harmonic functions. 

\begin{itemize} 
\item[\rm (HC)] 
There exist $C>0$ and $\b\in (0,1)$
 such that, for all $0<r<R_0$, 
\begin{equation*} 
                     |h(x)-h(x_0)|\le C \|h\|_\infty \biggl(\frac{\rho_0(x)} {  r}\biggr)^\b  \quad\mbox{ for all }
                                            h\in \H_b(U_r) \mbox{ and } x\in U_r.
 \end{equation*} 
\end{itemize} 
To that end we introduce the following   properties. 

\begin{itemize}
\item[\rm (J$_1$)]
There are $\a,\delta_0\in (0,1)$ such that, 
for every  $0<r<R_0$ and every  universally~measurable set $A$ in $U_r$, 
\begin{equation}\label{alternative}
\inf\{  \mu_x^{U_{\a r}}(  A) \colon x\in U_{\a^2 r}\}> \delta_0 
             \quad\mbox { or } \quad
\inf\{  \mu_x^{U_{\a r}}(U_r\setminus A) \colon x\in U_{\a^2 r}\}> \delta_0.
\end{equation} 
\item[\rm (J$_2$)] 
There are $\a_0,a_0\in (0,1)$ and $C_0\ge 1$ such that, for all $0<r<R_0$ and  $n\in\nat$,
\begin{equation}\label{jump-m}
              \mu_x^{U_{\a_0^n r}}(U_r^c)\le C_0a_0^n\qquad \mbox{ for every }x\in U_{\a_0^{n+1}r}.
\end{equation} 
\end{itemize}
 
Of course,  (J$_2$) holds trivially if the harmonic measures
$\mu_x^U$, $ U\in \Uo$,  $x\in U$,
are supported by the boundary of $U$, that is, in the Examples \ref{hunt-bal}, if $\mathfrak X$ is a~diffusion
or~$(X,\W)$ is a harmonic space (since then  $\mu_x^{U_{\a^n r}}(U_r^c)=0$ for  $x\in U_{\a^n r}$).

\begin{lemma}\label{J-strong}
If {\rm (J$_2$)} holds, then, for \emph{every} $a\in (0,1)$, there
exists $ \a\in (0,1)$ 
such that, for all $0<r<R_0$ and $n\in\nat$,
\begin{equation}\label{essential-jump}
              \mu_x^{U_{  \a^n r}}(U_r^c)\le a^n \qquad \mbox{ for
                every } x\in U_{ \a^{n+1} r}.
\end{equation} 
\end{lemma}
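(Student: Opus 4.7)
The plan is to iterate \textrm{(J$_2$)} in a way that swaps the weak constant‐times‐geometric bound $C_0 a_0^n$ for the sharp geometric bound $a^n$ by taking $\alpha$ to be a sufficiently high power of $\alpha_0$. The qualitative idea is that over $k$ nested scales the prefactor $C_0$ costs nothing, while the ratio $a_0^k$ can be made as small as we wish.

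First I would fix $a\in (0,1)$ and choose $k\in\nat$ so large that $C_0 a_0^k\le a$; this is possible because $a_0\in (0,1)$. Then set $\alpha := \alpha_0^k$, which lies in $(0,1)$ as required.

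Next, given $0<r<R_0$ and $n\in\nat$, take any $x\in U_{\alpha^{n+1}r}=U_{\alpha_0^{k(n+1)}r}$. Since $k\ge 1$, we have $k(n+1)\ge kn+1$, and hence the inclusion $U_{\alpha_0^{k(n+1)}r}\subset U_{\alpha_0^{kn+1}r}$, so that $x\in U_{\alpha_0^{kn+1}r}$. Applying \textrm{(J$_2$)} with $kn$ in place of $n$ therefore yields
\[
     \mu_x^{U_{\alpha^n r}}(U_r^c)
     = \mu_x^{U_{\alpha_0^{kn}r}}(U_r^c)
     \le C_0\,a_0^{kn}
     = C_0\,(a_0^k)^n.
\]
Using the choice $C_0 a_0^k\le a$, so that $a_0^k\le a/C_0$, the right‐hand side is at most $C_0\,(a/C_0)^n = C_0^{\,1-n}\,a^n \le a^n$ since $C_0\ge 1$ and $n\ge 1$. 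This gives \eqref{essential-jump}.

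There is essentially no obstacle here beyond careful bookkeeping of the exponents of $\alpha_0$; the only point one must not blur is that in the hypothesis the base point sits \emph{one} power of $\alpha_0$ deeper than the ball whose harmonic measure is evaluated, while after rescaling it sits $k$ powers deeper, which is precisely what provides the containment $U_{\alpha_0^{k(n+1)}r}\subset U_{\alpha_0^{kn+1}r}$ needed to invoke \textrm{(J$_2$)}.
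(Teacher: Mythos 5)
Your proof is correct and follows essentially the same route as the paper: fix $k$ with $C_0 a_0^k\le a$, set $\alpha:=\alpha_0^k$, use the inclusion $U_{\alpha_0^{k(n+1)}r}\subset U_{\alpha_0^{kn+1}r}$ to invoke (J$_2$) at index $kn$, and absorb the constant $C_0$ via $C_0^{1-n}\le 1$. No differences worth noting.
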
 

\begin{proof} If (J$_2$) holds with $\a_0, a_0, C_0$, 
 we  fix $k\in\nat$  with $a_0^k<C_0\inv a$, and  take $ \a:=\a_0^k$. 
Then, for all   $n\in\nat$ and $x\in U_{\a^{n+1} r} \subset U_{\a_0^{kn+1}r}$,
\begin{equation*}
\mu_x^{U_{\a^n r}}(U_r^c)\le C_0 a_0^{kn} \le C_0 (C_0\inv
a)^n\le a^n.
\end{equation*} 
\end{proof} 

\begin{remark}\label{smaller-a}
{\rm 
If (J$_1$), (J$_2$) or (\ref{essential-jump}) hold for some $\a\in (0,1)$, then
they hold for any $\tilde \a\in (0,\a)$ (keeping the other constants).

Indeed, for (J$_2$) and (\ref{essential-jump}), this is true, since 
$\mu_x^{U_{ \a^n r}}(U_r^c)\ge\mu_x^{U_{\tilde \a^n r}}(U_r^c)$,  by (\ref{it-bal-det}).
For ($J_1$),  it suffices to observe that, defining  $\tilde r:=(\tilde \a/\a)r$, trivially 
 \begin{equation*} 
   \mu_x^{U_{\tilde a r}}(A)=   \mu_x^{U_{a \tilde r}}(A)\ge \mu_x^{U_{a \tilde r}}(A\cap  U_{\tilde r}).
\end{equation*} 
}
\end{remark}

\begin{theorem}\label{hoelder} 
Suppose   {\rm(J$_1$)} and {\rm (J$_2$)}. 
Then {\rm (HC)} holds
{\rm (}with $C$ and $\b$   which depend only on the  constants
in  {\rm(J$_1$)} and {\rm (J$_2$)}{\rm)}.
\end{theorem}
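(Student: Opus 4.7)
The plan is to obtain geometric decay of the oscillation
$\omega_n := \sup_{U_n} h - \inf_{U_n} h$ on the nested family $U_n := U_{\alpha^n r}$, $n \ge 0$. By Remark~\ref{smaller-a} together with Lemma~\ref{J-strong}, I may, after shrinking~$\alpha$, assume simultaneously that (J$_1$) holds with a constant~$\delta_0$ and that
\begin{equation*}
\mu_x^{U_{\alpha^n r}}(U_r^c) \le a^n \qquad \text{for all } x \in U_{\alpha^{n+1}r},
\end{equation*}
with a free parameter $a \in (0,1)$ to be fixed small below. An estimate $\omega_n \le K \|h\|_\infty \eta^n$ with $\eta \in (0,1)$ and $K > 0$ depending only on the constants in (J$_1$) and (J$_2$) then implies (HC) with $\beta := \log(1/\eta)/\log(1/\alpha)$, by selecting $n$ with $\alpha^{n+1} r \le \rho_0(x) < \alpha^n r$ and using $|h(x) - h(x_0)| \le \omega_n$.

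The core step bounds $\omega_{n+1}$ in terms of $\omega_0, \ldots, \omega_{n-1}$. For $n \ge 1$, set $M_k := \sup_{U_k} h$, $m_k := \inf_{U_k} h$, and apply (J$_1$) at scale $\alpha^{n-1}r$ to the universally measurable set $A^+ := \{y \in U_{n-1} : h(y) \ge (M_{n-1}+m_{n-1})/2\}$. After possibly replacing $h$ by $-h$, I may assume $\mu_x^{U_n}(A^+) \ge \delta_0$ for every $x \in U_{n+1}$. Since $\overline{U_n} \subset U_{n-1} \subset U_0$, harmonicity gives $h(x) = \int h\,d\mu_x^{U_n}$ for $x \in U_{n+1}$. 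Decomposing the support of $\mu_x^{U_n}$ as $A^+ \cup ((U_{n-1}\setminus U_n) \setminus A^+) \cup \bigcup_{k=0}^{n-2}(U_k \setminus U_{k+1}) \cup U_0^c$ and bounding $M_{n-1} - h$ piecewise by $\omega_{n-1}/2$, $\omega_{n-1}$, $\omega_k$ (using $M_{n-1} \le M_k$ and $h \ge m_k$ on $U_k$), and $2\|h\|_\infty$ respectively, combined with the jump estimate $\mu_x^{U_n}(U_k^c) \le a^{n-k}$ from Lemma~\ref{J-strong}, yields
\begin{equation*}
\omega_{n+1} \le (1 - \delta_0/2)\,\omega_{n-1} + \sum_{k=0}^{n-2} \omega_k\, a^{n-k-1} + 2\|h\|_\infty\, a^n.
\end{equation*}

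To close the induction under the hypothesis $\omega_k \le K\|h\|_\infty \eta^k$ for $k \le n$, I use the geometric estimates $\sum_{k=0}^{n-2}\eta^k a^{n-k-1} \le \eta^{n-1}\cdot a/(\eta-a)$ (valid for $a<\eta$) and $a^n \le a\,\eta^{n-1}$, which reduce the desired inequality $\omega_{n+1} \le K\|h\|_\infty \eta^{n+1}$ to the scalar condition
\begin{equation*}
\eta^2 \;\ge\; 1 - \delta_0/2 + \frac{a}{\eta - a} + \frac{2a}{K}.
\end{equation*}
I then fix $\eta := \sqrt{1 - \delta_0/4}$, so that the left excess over $1-\delta_0/2$ is $\delta_0/4$; choose $a$ small enough (permissible by Lemma~\ref{J-strong}, adjusting $\alpha$) so that $a/(\eta-a) \le \delta_0/8$; and finally pick $K$ large enough that $2a/K \le \delta_0/8$ together with $K \ge 2/\eta$ to cover the base cases $\omega_0, \omega_1 \le 2\|h\|_\infty$.

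The principal obstacle, as I see it, is the tail sum $\sum_{k=0}^{n-2}\omega_k\,a^{n-k-1}$ contributed by annular regions far from~$x_0$: there the inductively controlled $\omega_k$ remain of order $\eta^k$, which is close to~$1$ for small~$k$, so one must pit the jump-measure decay $a^{n-k-1}$ against a potentially slow factor in~$k$. This is exactly why the strong form of~(J$_2$) in Lemma~\ref{J-strong}, where $a$ may be made arbitrarily small at the expense of shrinking~$\alpha$, is essential, and why Remark~\ref{smaller-a} is invoked to preserve~(J$_1$) after this shrinking.
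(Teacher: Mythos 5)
Your proof is correct and follows essentially the same route as the paper: a strong induction establishing geometric decay of the oscillation over the nested sets $U_{\a^n r}$, with the (J$_1$) dichotomy applied to the median super-level set supplying the contraction factor $1-\delta_0/2$ and the sharpened form of (J$_2$) from Lemma \ref{J-strong} (with $a$ made small after shrinking $\a$ via Remark \ref{smaller-a}) absorbing the contribution of the far annuli. The only (harmless) cosmetic difference is that you estimate $M_{n-1}-h(x)$ directly against the exit measure $\mu_x^{U_n}$ of the universally measurable set $A^+$, whereas the paper passes to a closed subset $F$ of the good set and uses the exit measure of the slit domain $B_n\setminus F$; both devices yield the same two-step recursion.
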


  \begin{proof}[Proof {\rm (cf.\ the proofs of
      \cite[Theorem 4.1]{bass-levin} and \cite[Theorem~1.4]{kassmann-mimica-final})}]
Let $\delta_0\in (0,1)$ satisfy~(J$_1$). We define   $ \delta:=\delta_0/6$, $a:=\delta/2$. Then 
$a<(1-a)\delta$, and there exists $1<b< \sqrt{3/2} $ with
\begin{equation}\label{def-ab}
      b^3(1-3\delta)\le 1-2\delta   \und a b^4<(1-ab)\delta.
\end{equation}
By Lemma \ref{J-strong} and Remark \ref{smaller-a}, there exists $\a\in (0,1)$
such that the statement in~(J$_1$) holds with this $\a$ and $\delta_0$, and
\begin{equation}\label{jump-2}
   \mu_x^{U_{\a^k r}}(U_r^c)\le a^k,\qquad  \mbox{$0<r<R_0$,  $k\in \nat$, $x\in U_{\a^{k+1} r}$.}
\end{equation} 
 We now fix  $0<r<R_0$  and $h\in\H_b(U_r)$ with   $\|h\|_\infty=1$. 
For $n=0,1,2,\dots$  let  
\[
      B_n:= U_{\a^n r} , \qquad M_n(h):=\frac12 (\sup h( B_n)+\inf h(B_n)),
\]
\[
\osc\nolimits_n(h):=\sup h(B_n)-\inf h(B_n)=\sup\{h(x)-h(y)\colon x,y\in B_n\},
 \]
Clearly,  $M_n(-h)=-M_n(h)$ and $\osc_n(-h)=\osc_n(h)$. We claim that, for  $n\ge 0$, 
\begin{equation}\label{hoelder-ess}
        \osc\nolimits_n(h)\le s_n:=3 b^{-n}. 
\end{equation} 
Clearly, (\ref{hoelder-ess})  holds trivially for $n=0,1,2$,  since $\osc_n(h)\le 2$ and $b^2<3/2$.

Let  us  consider $n\ge 2$ and suppose that  (\ref{hoelder-ess}) holds for all $0\le j\le n$. Let  $S_n:=B_n\setminus B_{n+1}$
and
\[
     A(h):=  \{z\in S_n\colon h(z)\le M_n(h)\}, \qquad
     B(h):=\{z\in S_n\colon h(z)\ge M_n(h)\}. 
\] 
By (J$_1$),  
\begin{equation*} 
\inf\{ \mu_x^{B_{n+1}}(  A(h))\colon x\in B_{n+2}\}  > \delta_0
\quad\mbox{ or } \quad         
\inf\{ \mu_x^{B_{n+1}}(  B(h))\colon x\in B_{n+2} \}  > \delta_0.
\end{equation*} 
Without loss of generality $\inf\{ \mu_x^{B_{n+1}}(  A(h))\colon x\in B_{n+2} \}  > \delta_0$
(otherwise we   replace~$h$ by $-h$ using $B(h) =A(-h)$). 

Now let us fix two points $x,y\in B_{n+2}$. We  claim that 
\begin{equation}\label{to-show}
h(x)-h(y)\le s_{n+1}.
\end{equation} 

We may choose a closed set $F$
in $ A(h)$ such that $\mu_x^{B_{n+1}}(F)>\delta_0$. Then  
\begin{equation*} 
\mu:=\mu_x^{B_n\setminus F} 
\end{equation*} 
satisfies $\mu(F)>\delta_0=6\delta$, by (\ref{it-bal-det}).
Since $h\in \H_b(U_r)$ and $\mu$ is a probability measure, 
\begin{equation}\label{mu-equation}
           h(x)-h(y)=\int  (h-h(y))\,d\mu.
\end{equation} 
The measure $\mu$  is supported by $F\cup B_n^c$.
Clearly,
\begin{equation*} 
    \int_F (h-h(y))\,d\mu\le \bigl(M_n(h) -\inf h(B_n)\bigr)
    \mu(F)=\frac 12 \osc\nolimits_n(h) \mu(F)<
\frac 12 s_{n-1} \mu(F).
\end{equation*} 
Since $\mu(B_{n}^c)=1-\mu(F)$, we see that
\begin{equation*} 
           \int_{B_{n-1}\setminus B_{n}} (h-h(y)) \,d\mu \le s_{n-1} (1-\mu(F)).
\end{equation*} 
   Combining the two previous estimates  we obtain that 
\begin{equation}\label{mu-ess-2}
\int_{F\cup (B_{n-1}\setminus B_{n})} (h-h(y))\,d\mu \le s_{n-1} (1-\frac 12 \mu(F)) \le s_{n+2}   (1-2\delta) ,
\end{equation} 
where the last inequality follows from $\mu(F)>6\delta$ and (\ref{def-ab}).
Moreover 
\begin{equation}\label{mu-ess-3}
          \int_{B_{n-1}^c} (h-h(y))\,d\mu\le  2\mu(B_0^c) +\sum\nolimits_{j=0}^{n-2} s_j  \mu(B_j \setminus B_{j+1}). 
\end{equation} 
By (\ref{it-bal-det}) and  (\ref{jump-2}) (applied to $\a^mr$ in place of $r$),  for all $0\le m\le n$,
\[ 
         \mu(B_m^c)\le \mu_x^{B_n}(B_m^c)\le a^{n-m}.
\]
By (\ref{def-ab}), $ab^4<(1-ab)\delta$. Hence  
 $2\mu(B_0^c)\le 2a^n\le 2(b^{-4}\delta)^n< s_{n+2} \delta$ and
\[
      \sum\nolimits _{j=0}^{n-2} s_j \mu(B_j\setminus B_{j+1}) \le \sum\nolimits _{j=0}^{n-2} s_j \mu( B_{j+1}^c) 
\le     3  \sum\nolimits _{j=0}^{n-2} b^{-j} a^{n-(j+1)}
                =  s_{n+2} s, 
\]
where
\[     s =b^{n+2} \sum\nolimits_{j=0}^{n-2} b^{-j} a^{n-(j+1)}=b^3 \sum\nolimits_{j=0}^{n-2}   (ab)^{n-(j+1)}
<  \frac{a b^4} { 1-ab} < \delta.
\]
Having (\ref{mu-equation}), the estimates  (\ref{mu-ess-2}) and
(\ref{mu-ess-3}) hence
  yield   $h(x)-h(y)\le s_{n+2}$. So  (\ref{to-show})~holds, and we
  see that  $\osc_{n+2}(h)\le s_{n+2}$.

 Given $x\in B_0\setminus \{x_0\}$, there exists $n\ge 0$ 
such that $x\in B_n\setminus B_{n+1}$. Defining  $\b:= (\ln b)/\ln (1/\a)$, we finally conclude that
\[
               |h(x)-h(x_0)|\le  3b^{-n} = 3\a^{ n\b} \le 3 \biggl(\frac{\rho_0(x)}{\a r}\biggr)^\b.
\]
\end{proof} 

\begin{remark}\label{nec} {\rm
The proof shows that in  dealing with harmonic functions which are Borel
measurable, continuous, respectively, we need (J$_1$) only for sets
$A$ in $U_r$ which are Borel measurable, relatively closed in $U_r$,
respectively.
}
\end{remark}

To see that (J$_1$) is almost necessary for H\"older continuity
of bounded harmonic functions we introduce the following 
weak property which  immediately follows from both             
(J$_1$) and (J$_2$) and merely states that $\rho_0$ provides a
suitable scaling  at $x_0$. 

\begin{itemize}
\item[\rm (J$_0$)] There are $\a, \delta_0\in (0,1)$ such that, for every $0<r<R_0$,
\begin{equation}\label{jump-0} 
         \mu_{x_0} ^{U_{\a r}} (U_r)>\delta_0.
\end{equation} 
\end{itemize} 
For the moment, let us fix $0<r<R_0$, $\a\in (0,1)$, 
let $S:=U_r\setminus U_{\a r}$ and    $A$~be a~universally
    measurable set in $X$. 
Of course, $\mu_{x_0}^{U_{\a r}}(U_r)=\mu_{x_0}^{U_{\a r}}(S)$, 
hence  (\ref{jump-0}) implies that
    $\mu_{x_0}^{U_{\a r}} (S\cap A)> \delta_0/2$ or
    $\mu_{x_0}^{U_{\a r}} (S\setminus  A)> \delta_0/2$, and  
    there exists a~closed set~$F$ in~$S\cap A$ or~$S\setminus A$ such
    that 
\begin{equation}\label{j0F}
\mu_{x_0}^{U_{\a r}}(F)>\delta_0/2.
\end{equation} 

\begin{proposition}\label{necessary}
Assuming {\rm (J$_0$)}, property {\rm (J$_1$)} is necessary for  {\rm (HC)}.
\end{proposition}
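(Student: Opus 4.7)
The approach is to build, for each universally measurable $A\subset U_r$, a bounded harmonic function on $U_{\alpha r}$ whose value at $x_0$ is bounded below by a fixed constant, and then invoke (HC) to propagate this lower bound down to the smaller ball $U_{\alpha^2 r}$; evaluating this harmonic function at points of $U_{\alpha^2 r}$ will give the estimate demanded by (J$_1$). A preliminary observation is that (J$_0$) automatically transfers to any smaller parameter while keeping the same $\delta_0$: if it holds for some $\alpha_0,\delta_0$, then applying it at radius $\tilde r:=(\tilde\alpha/\alpha_0)r$ yields $\mu_{x_0}^{U_{\tilde\alpha r}}(U_{\tilde r})>\delta_0$, and since $U_{\tilde r}\subset U_r$ we get $\mu_{x_0}^{U_{\tilde\alpha r}}(U_r)>\delta_0$. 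Hence, denoting by $C,\beta$ the constants in (HC), I may fix once and for all an $\alpha\in(0,\alpha_0]$ with $C\alpha^\beta<\delta_0/4$.

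Now fix $0<r<R_0$ and a universally measurable $A\subset U_r$. The paragraph preceding the proposition already supplies a closed set $F$, contained either in $S\cap A$ or in $S\setminus A$ (where $S=U_r\setminus U_{\alpha r}$), with $\mu_{x_0}^{U_{\alpha r}}(F)>\delta_0/2$; by the symmetric form of the dichotomy in (J$_1$) I may assume $F\subset S\cap A$, so in particular $F\subset A\cap U_{\alpha r}^c$. I then define
\[
   h(x):=\mu_x^{U_{\alpha r}}(F).
\]
Since $F$ is closed, hence Borel, property~(ii) together with the remark following~(1.6) gives $h\in\H_b(U_{\alpha r})$ with $\|h\|_\infty\le 1$ and $h(x_0)>\delta_0/2$.

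Applying (HC) at the scale $\alpha r$ to this $h$, I get for every $x\in U_{\alpha^2 r}$
\[
   |h(x)-h(x_0)|\le C\,\|h\|_\infty\Bigl(\frac{\rho_0(x)}{\alpha r}\Bigr)^\beta\le C\alpha^\beta<\frac{\delta_0}{4},
\]
and therefore $h(x)>\delta_0/4$. Since $F\subset A$, this means $\mu_x^{U_{\alpha r}}(A)\ge h(x)>\delta_0/4$ for every $x\in U_{\alpha^2 r}$, which is precisely the first alternative in (J$_1$) with constants $\alpha$ and $\delta_0/4$; in the parallel case $F\subset S\setminus A$ the identical argument produces the second alternative for $U_r\setminus A$.

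The main obstacle is the scaling step at the very beginning: without the freedom to shrink the $\alpha$ from (J$_0$), one could have $C\alpha^\beta\ge\delta_0/4$, in which case (HC) on $U_{\alpha r}$ would not be strong enough to propagate the positivity of $h$ at $x_0$ to the entire ball $U_{\alpha^2 r}$. Once the scaling observation is in hand, the rest is a direct combination of the inner-regularity construction of $F$ already carried out before the proposition with the natural hitting-measure harmonic function $x\mapsto\mu_x^{U_{\alpha r}}(F)$.
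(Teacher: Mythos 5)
Your proof is correct and follows essentially the same route as the paper's: both choose $\a$ small enough that $C\a^\b<\delta_0/4$, take the closed set $F$ with $\mu_{x_0}^{U_{\a r}}(F)>\delta_0/2$ supplied by the paragraph preceding the proposition, and use (HC) applied to the harmonic function $x\mapsto\mu_x^{U_{\a r}}(F)$ to push the lower bound down to $U_{\a^2 r}$. Your explicit verification that (J$_0$) persists under shrinking $\a$ is a detail the paper leaves implicit, but it is not a different argument.
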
 

\begin{proof} Suppose that (HC) and (J$_0$) hold with $C,\b,\a_0, \delta_0$.  We choose $0<\a<\a_0$ such that $C \a^\b<\delta_0/4$.
Let $0<r<R_0$ and $F$ be a closed set in $U_r\setminus U_{\a r}$ such that (\ref{j0F}) holds.
The  function $x\mapsto \mu_x^{U_{\a r}}(F)$ is harmonic 
 on $U_{\a r}$. So (HC) implies that, for every $x\in U_{\a^2 r}$,
 \begin{equation*} 
        \mu_x^{U_{\a r}}(F)\ge  \mu_{x_0}^{U_{\a  r}}(F)-C \a^\b >\delta_0/4. 
  \end{equation*} 
\end{proof}

\begin{corollary}\label{diffusion-hoelder}
Suppose that the   measures $\mu_x^U$, $x\in U\in \Uo$, are supported
by the boundary of $U$. Then {\rm(HC)} holds if and only if
{\rm(J$_1$)} holds.
\end{corollary}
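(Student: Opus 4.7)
The plan is to deduce both implications as immediate consequences of results already at hand, by observing that the boundary-support hypothesis makes both the auxiliary jump conditions (J$_0$) and (J$_2$) automatic, so that Theorem~\ref{hoelder} and Proposition~\ref{necessary} each reduce to a one-line check.

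For the implication (J$_1$) $\Rightarrow$ (HC), I would invoke Theorem~\ref{hoelder}, whose hypotheses are (J$_1$) and (J$_2$). Property (J$_2$) is given for free here: for any $0<r<R_0$, $\a\in(0,1)$, and $n\in\nat$, the measure $\mu_x^{U_{\a^n r}}$ is supported by $\partial U_{\a^n r}$, which by continuity of $\rho_0$ lies in $\{\rho_0=\a^n r\}\subset U_r$. Hence $\mu_x^{U_{\a^n r}}(U_r^c)=0$, so (\ref{jump-m}) holds trivially (with any choice of $\a_0,a_0,C_0$), as already noted in the paragraph after the definition of (J$_2$).

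For the reverse implication (HC) $\Rightarrow$ (J$_1$), I would apply Proposition~\ref{necessary}, whose only extra hypothesis is (J$_0$). I claim (J$_0$) also holds automatically: for any $\a\in(0,1)$ and $0<r<R_0$, the measure $\mu_{x_0}^{U_{\a r}}$ is supported by $\partial U_{\a r}\subset\{\rho_0\le\a r\}\subset U_r$, and it has total mass $1$ by~(i); therefore $\mu_{x_0}^{U_{\a r}}(U_r)=1>\delta_0$ for any $\delta_0\in(0,1)$. So (\ref{jump-0}) holds, Proposition~\ref{necessary} applies, and (J$_1$) follows from (HC).

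There is no real obstacle here — the only thing to be careful about is the elementary topological step that the support of $\mu_x^{U_{\a^n r}}$ (a subset of $\partial U_{\a^n r}$) really lies in $U_r$, which uses continuity of $\rho_0$ together with $\a<1$. Once that is in place, the corollary is just the combination of Theorem~\ref{hoelder} and Proposition~\ref{necessary}, both of whose auxiliary assumptions collapse to triviality.
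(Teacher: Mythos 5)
Your proof is correct and is exactly the argument the paper intends: the corollary is stated without a written proof precisely because the paper has already remarked that (J$_2$) holds trivially under the boundary-support hypothesis (paragraph after the definition of (J$_2$)) and that (J$_0$) does as well (remark before Corollary~\ref{harnack-hoelder}), so the two directions follow from Theorem~\ref{hoelder} and Proposition~\ref{necessary} respectively. Your explicit topological check that $\partial U_{\a r}\subset\{\rho_0\le\a r\}\subset U_r$ is the right justification for those remarks.
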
 

Moreover, Theorem \ref{hoelder} will quickly lead to Corollary \ref{harnack-hoelder} which, in turn,
yields H\"older continuity of bounded harmonic functions and continuity of harmonic functions
provided there is a suitable associated Green function (see Remark \ref{harnack-appl}). 
For a direct application of Theorem \ref{hoelder} 
in the setting of \cite{kassmann-mimica-final} see  Sections  \ref{application} and \ref{examples}.

\section{Harnack inequalities imply H\"older continuity}

Next let us see that the following scaling invariant Harnack
inequalities are sufficient for (HC)  provided (J$_0$) holds. 

\begin{itemize} 
\item[\rm (HI)]  Harnack inequality:
There exist $\a\in (0,1)$ and $K\ge 1$ such that
\begin{equation*} 
              \sup h(  U_{\a r})\le K\inf h( U_{\a r})\qquad\mbox{ for all  } 0<r<R_0 \mbox{ and } h\in \H_b^+(U_r).
\end{equation*} 
\end{itemize} 

\begin{proposition}\label{suff-j}
 If {\rm(J$_0$)} and {\rm(HI)} hold, then {\rm (J$_1$)}  and {\rm (J$_2$)} are  satisfied. 
\end{proposition}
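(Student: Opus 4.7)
The plan is to use (HI) to transport the pointwise estimate at~$x_0$ provided by (J$_0$) to a whole ball $U_{\a^2 r}$ around~$x_0$. Before starting, I would shrink~$\a$ if necessary so that a single~$\a\in(0,1)$ serves both (J$_0$) (with some $\d_0$) and (HI) (with some $K\ge 1$); this is legitimate because both properties persist on replacing~$\a$ by any smaller value---(J$_0$) because $\mu_{x_0}^{U_{\a r}}(U_r^c)$ is monotone in~$\a$ by an immediate application of~(\ref{it-bal-det}) with $V=U_{\tilde\a r}\subset U=U_{\a r}$, and (HI) because $\sup h$ decreases and $\inf h$ increases on a smaller ball.

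For~(J$_1$), given a universally measurable $A\subset U_r$, the partition $U_r=A\cup(U_r\sms A)$ combined with (J$_0$) forces one of $\mu_{x_0}^{U_{\a r}}(A)$, $\mu_{x_0}^{U_{\a r}}(U_r\sms A)$ to exceed~$\d_0/2$; WLOG the first. Then $h(z):=\mu_z^{U_{\a r}}(A)$ lies in $\H_b^+(U_{\a r})$, and (HI) applied with $\a r$ in place of $r$ yields $\sup_{U_{\a^2 r}}h\le K\inf_{U_{\a^2 r}}h$, whence $\inf_{U_{\a^2 r}}h\ge h(x_0)/K>\d_0/(2K)$. This is (J$_1$) with $\d_0':=\d_0/(2K)$.

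For (J$_2$), the analogous trick applied to $g(z):=\mu_z^{U_{\a r}}(U_r^c)$ yields a one-scale estimate: (J$_0$) gives $g(x_0)\le 1-\d_0$, and (HI) applied to $1-g\in\H_b^+(U_{\a r})$ gives $\sup_{U_{\a^2 r}}g\le c:=1-\d_0/K\in(0,1)$. Rescaling $r\mapsto\a^{k-1}r$ yields
\[
\mu_x^{U_{\a^k r}}(U_{\a^{k-1}r}^c)\le c \qquad\text{for all }k\ge 1,\ x\in U_{\a^{k+1}r}.
\]
It remains to iterate this one-scale bound across $n$ scales to obtain $\mu_x^{U_{\a^n r}}(U_r^c)\le C_0 c^n$, after which Lemma~\ref{J-strong} upgrades to any desired~$a_0\in(0,1)$.

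The iteration is the main obstacle. Writing $h_j(z):=\mu_z^{U_{\a^j r}}(U_r^c)\in\H_b^+(U_{\a^j r})$, the balayage identity~(\ref{it-bal}) with $V=U_{\a^j r}\subset U=U_{\a^{j-1}r}$ gives the recursion
\[
h_{j-1}(x)=h_j(x)+\int_{U_{\a^{j-1}r}\sms U_{\a^j r}}h_{j-1}(y)\,d\mu_x^{U_{\a^j r}}(y),
\]
so the goal is to argue that at each step the integrand contributes a definite fraction of $h_{j-1}(x)$, giving $h_j(x)\le (1-\eta)h_{j-1}(x)$ for some $\eta>0$ independent of~$j$. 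The difficulty is that the integrand picks up values of~$h_{j-1}$ on the annulus $U_{\a^{j-1}r}\sms U_{\a^j r}$, while (HI)---centered at~$x_0$---only gives direct control on the inner ball~$U_{\a^j r}$. Resolving this should rely on combining the harmonicity mean-value identity $h_{j-1}(x)=\int h_{j-1}\,d\mu_x^{U_{\a^j r}}$ with the one-scale estimate to control the portion of the exit mass of $\mu_x^{U_{\a^j r}}$ landing past $\partial U_{\a^{j-1}r}$.
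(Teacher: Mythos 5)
Your treatment of (J$_1$) is correct and is essentially the paper's argument (the paper first passes to a closed set $F\subset A$ or $F\subset U_r\sms A$ with $\mu_{x_0}^{U_{\a r}}(F)>\delta_0/2$ and applies (HI) to $z\mapsto \mu_z^{U_{\a r}}(F)$, but this is the same mechanism). The problem is (J$_2$): you prove only the one-scale bound $\sup_{U_{\a^2 r}}\mu_\cdot^{U_{\a r}}(U_r^c)\le 1-\delta_0/K$ and then explicitly defer the iteration, which is the actual content of (J$_2$). The recursion you write, $h_{j-1}(x)=h_j(x)+\int_{U_{\a^{j-1}r}\sms U_{\a^j r}}h_{j-1}\,d\mu_x^{U_{\a^j r}}$, only gives $h_j\le h_{j-1}$; to extract a factor $1-\eta$ you need a \emph{lower} bound on $h_{j-1}$ at the points of the annulus $U_{\a^{j-1}r}\sms U_{\a^j r}$ charged by $\mu_x^{U_{\a^j r}}$, together with a lower bound on the mass landing there. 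But (HI) applied to $h_{j-1}\in\H_b^+(U_{\a^{j-1}r})$ controls $h_{j-1}$ only on the smaller ball $U_{\a^j r}$, not on that annulus, and the one-scale estimate says nothing about the values of $h_{j-1}$ at the landing points. So the suggested fix does not close the gap as stated.

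The paper's resolution is a different device: it does not exit from a smaller ball into the annulus, but carves a set out of the domain so that the exit measure is \emph{forced} to charge a region where Harnack applies. Concretely, for $0<s\le r$, (J$_0$) (via (\ref{j0F})) provides a closed set $F\subset U_{\a s}\sms U_{\a^2 s}$ with $\mu_{x_0}^{U_{\a^2 s}}(F)>\delta_0$; one then works with the intermediate domain $U_s\sms F$. Since $z\mapsto\mu_z^{U_s\sms F}(F)$ is harmonic near $x_0$, (HI) and (\ref{it-bal-det}) give $\mu_x^{U_s\sms F}(F)\ge K\inv\delta_0$, and since both $x$ and every $y\in F$ lie in $U_{\a s}$, (HI) applied to the function $z\mapsto\mu_z^{U_s}(U_r^c)\in\H_b^+(U_s)$ gives $\mu_y^{U_s}(U_r^c)\ge K\inv\mu_x^{U_s}(U_r^c)$. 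Plugging these into the identity $\mu_x^{U_s\sms F}(U_r^c)=\mu_x^{U_s}(U_r^c)-\int_F\mu_y^{U_s}(U_r^c)\,d\mu_x^{U_s\sms F}(y)$ and using $\mu_x^{U_{\a^2 s}}(U_r^c)\le\mu_x^{U_s\sms F}(U_r^c)$ yields the genuine contraction $\mu_x^{U_{\a^2 s}}(U_r^c)\le(1-K^{-2}\delta_0)\,\mu_x^{U_s}(U_r^c)$, which iterates over two-scale steps to give (J$_2$) with $a_0=1-K^{-2}\delta_0$, $C_0=1$ and $\a^2$ in place of $\a$. This intermediate-domain trick is the missing idea in your proposal.
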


\begin{proof} Let $\a,\delta_0\in (0,1)$ satisfy  (J$_0$) and (HI), and let  $0<r<R_0$.

1) Let $F$ be a closed set in  
$ U_r\setminus U_{\a r}$ such that (\ref{j0F}) holds. By (HI), the harmonicity
of the function $x\mapsto \mu_x^{U_\a r}(F)$ on $U_{\a r}$ yields
 that $\mu_x^{U_{\a r}} (F)>(2K)\inv \delta_0$ for every $x\in
U_{\a^2 r}$. So (J$_1$) holds (with $(2K)\inv \delta_0$ in place of $\delta_0$).

2) Let $0<s\le r$. By (J$_0$),  
there exists a closed set $F$ in $U_{\a s}\setminus U_{\a^2 s}$ such
that  $\mu_{x_0}^{U_{\a^2 s}}(F)> \delta_0$.
 Let us fix $x\in U_{\a^3 r}$. By (HI) and  (\ref{it-bal-det}),
\begin{equation*}
K \mu_x^{U_s\setminus F}(F)\ge   \mu_{x_0}^{U_s\setminus F}(F)\ge 
  \mu_{x_0}^{U_{\a^2 s}}(F) >   \delta_0. 
\end{equation*} 
By (\ref{it-bal-det}),
\[
\mu_x^{U_{\a^2 s}}(U_r^c)\le \mu_x^{U_s\setminus F}(U_r^c) =    
  \mu_x^{U_{s}} (U_r^c)- \int_F \mu_y^{U_{s}}(U_r^c)\,d\mu_x^{U_s \setminus F}, 
\]
where, for every $y\in F$,  $\mu_y^{U_s} (U_r^c)\ge K\inv  \mu_x^{U_s}
(U_r^c) $, by (HI). 
Therefore
\[
\mu_x^{U_{\a^2 s} }(U_r^c)\le \bigl(1-K\inv \mu_x^{U_s\setminus F}(F)\bigr) \mu_x^{U_s}  (U_r^c)
\le \bigl(1-K^{-2}\delta_0\bigr) \mu_x^{U_s}(U_r^c). 
\]
Proceeding by induction,  we get (J$_2$) 
with $a:=1-K^{-2}\delta_0$, $C_0=1$ (since $\mu_x^{U_r}(U_r^c)=1$) and $\a^2$ in place of $\a$.
\end{proof} 

Thus Theorem \ref{hoelder} leads to the following result (where we might recall that (J$_0$)
trivially holds if, for every $0<r<R_0$,  the measure $\mu_{x_0}^{U_r}$ is supported by~$\partial U_r$). 

\begin{corollary}\label{harnack-hoelder}
  {\rm(J$_0$)} and {\rm(HI)} imply {\rm (HC)}. 
\end{corollary}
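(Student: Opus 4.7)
The plan is essentially to chain together the two main results already established in this section. Proposition \ref{suff-j} asserts that whenever (J$_0$) and (HI) both hold, the two hypotheses (J$_1$) and (J$_2$) of the main H\"older continuity theorem are automatically satisfied. Theorem \ref{hoelder} then delivers (HC) directly from (J$_1$) and (J$_2$), with constants $C$ and $\beta$ depending only on the constants appearing in (J$_1$) and (J$_2$).

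So my first step would be to invoke Proposition \ref{suff-j}: starting from the pair $(\alpha,\delta_0)$ witnessing (J$_0$) and the pair $(\alpha,K)$ witnessing (HI) (after possibly shrinking $\alpha$ so that a common value works for both, using Remark \ref{smaller-a} for (J$_0$)), its proof produces an explicit witness $(2K)^{-1}\delta_0$ for (J$_1$) and witness constants $a_0 = 1-K^{-2}\delta_0$, $C_0=1$, $\alpha_0 = \alpha^2$ for (J$_2$). Nothing further needs to be done at this stage; the content of Proposition \ref{suff-j} is exactly what is required.

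The second step is to feed these into Theorem \ref{hoelder}, which immediately yields the desired scaling invariant H\"older estimate for all bounded harmonic functions on the balls $U_r$, $0<r<R_0$, with exponent and constant depending only on $\delta_0$, $K$, and $\alpha$. There is no real obstacle here: the entire substance of the corollary has already been absorbed into Proposition \ref{suff-j} and Theorem \ref{hoelder}, and the corollary itself is a one-line combination. The only thing worth flagging, as the parenthetical remark preceding the corollary already does, is that in the standard situation where $\mu_{x_0}^{U_r}$ is supported by $\partial U_r$ (for instance diffusions, or harmonic spaces), assumption (J$_0$) is automatic, so in those cases (HI) alone suffices to imply (HC).
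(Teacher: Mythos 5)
Your proposal is correct and is exactly the argument the paper intends: the corollary is stated as an immediate consequence of Proposition \ref{suff-j} combined with Theorem \ref{hoelder}, with no further proof given. The explicit constants you extract from the proof of Proposition \ref{suff-j} match those in the paper.
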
 

\begin{remark}{\rm
\label{harnack-appl} For applications, where properties of an associated Green function imply (HI),
see \cite[Theorems 4.12, 5.2, 6.2, 6.3 and 7.3]{HN-scaling-harnack}.
}
\end{remark}

\section{A general application using  the Dynkin formula\\ 
 and the L\'evy system formula}\label{application}

In this section we shall present  a consequence of Theorem \ref{hoelder} which can  immediately be applied to 
the setting considered in    \cite{kassmann-mimica-final} (see Section \ref{examples}).

Let $X=\reald$, $d\ge 1$, and, for  $x_0\in\reald$ and $0<r\le \infty$, let
\begin{equation*} 
   B(x_0, r):=\{x\in\reald\colon |x-x_0|<r\}, \qquad B_r:=B(0,r).
\end{equation*} 
Let us  fix  $ K_0,  c_0,c_1,c_3 \in (1,\infty)$. 
Further,  let  $0<R<R_0\le \infty$ and  $U_0:=B_{2R_0}$. 
We assume that we have a  measurable function 
$K\colon U_0\times \reald\to [0,\infty)$ and a~continuous function $l\colon (0,R_0)\to (0,\infty)$ 
such that the following hold.
\begin{itemize}
\item[\rm (K)] 
For all $x\in B_{2R}$ and $h\in B_1$, $K(x,h)=K(x,-h)$, and 
\begin{equation*} 
\sup\nolimits_{x\in U_0} \int_{\reald} (1\wedge |h|^2)K(x,h)\, d h \le K_0.
\end{equation*} 
\item[\rm(L$_0$)] For all $x\in U_0$ and $h\in\reald$ with $|h|<R_0$,
\begin{equation*}
   c_0\inv   |h|^{-d} l(|h|)\le    K(x,h) \le c_0|h|^{-d} l(|h|) . 
\end{equation*} 
\item[\rm(L$_1$)] 
         For all $0<r\le s<R$,   
\begin{equation*} 
 l(r/2) \le c_1\,l(r) \und s^{-d} l(s)\le c_1 r^{-d} l(r).
\end{equation*} 
\item[\rm (L$_2$)] 
Defining $  L(r):= \int_r^{R_0} u\inv l(u)\,du$, $0<r\le R_0$, we have $L(0)=\infty$ and
\begin{equation*} 
\tilde L(r):= r^{-2}\int_0^r u l(u)\,du\le c_2   L(r) \qquad\mbox{ for every } 0<r<  R.
\end{equation*}  
\item[\rm(L$_3$)]   L(R/2) +$ (1\vee R^{-2})K_0\le c_3L(R)$.
\end{itemize} 
Moreover, we suppose that there exists a strong Markov process $\mathfrak X=(X_t,\mathbbm P^x)$ 
on~$\reald$ with trajectories
that are right continuous and have left limits and such that,
for all~$x_0\in B_R$, $0<r<R$ and $x\in B(x_0,r)$, the following holds for every $t>0$ and 
\[
          \tau_r:=\inf\{ u\ge 0\colon X_u\notin B(x_0,r)\}.
\]
\begin{itemize} 
\item[\rm(D)] Dynkin   formula: 
For all $f\in \C^{\infty}(\reald)$ with  compact support,
\begin{equation*} 
    \mathbbm E^{x} f(X_{\tau_r\wedge t} )-f(x)=\mathbbm E^{x}\int_0^{\tau_r\wedge t} \int_{\reald}
 \bigl(f(X_u+h)-f(X_u)\bigr)K(X_u,h)\,dh\,du.\footnote{A term $\langle \nabla f(X_u),  h \rangle \mathbbm 1_{\{|h|<1\}} K(X_u,h)$
the reader may expect in the integral on $\reald $  does not yield any contribution because of $K(\cdot,h)=K(\cdot,-h)$.}
\end{equation*} 
\item[\rm(LS)] L\'evy system formula:
For all  Borel sets  $A$ in $B(x_0,r)^c$, 
\begin{equation*} 
\mathbbm P^x[X_{\tau_r\wedge t}\in A] =\mathbbm E^{x} \int_0^{\tau_r\wedge t}  \int_A K(X_u, z-X_u)\,dz\,du.
\end{equation*} 
\end{itemize} 

The existence of such a process is assured if $K(x,h)$ does not depend
on $x$; in the general case it has been established in various contexts
(see the discussion in~\cite{AK09}). 

The only reason for assuming the weird condition (L$_3$)  is that we then may 
stress that  constants $\b$, $C$ and $C_j\in (1,\infty)$, $1\le j\le 5$,  introduced later~on, are valid for  all~$R_0$, $R$, 
$K$ and $l$ satisfying   (K), (L$_0$) -- (L$_3$), (D) and (LS).

Let us observe right away that $\int_{\{r<|h|<R_0\}} K(x_0,h)\,dh
\approx L(r)$,  by (L$_0$), and hence~(K) implies that  $0<L(r)<\infty$ on $(0,R_0)$,
and $L$ is strictly decreasing and continuous on $[0,R_0]$ (with $L(R_0)=0$).

 We claim that Theorem \ref{hoelder}   leads to a result, which  immediately implies the  statement of  Theorem 3 
 in the case $f=0$ and Theorem 12 in \cite{kassmann-mimica-final} (see Section \ref{examples} and 
Corollary \ref{km-corollary}).

\begin{theorem}\label{hoelder-levy}
There exist $C>0$ and   $\b\in (0,1)$ such that,  
for all~$x_0\in B_R$, $0<r<R$ , 
$h\in \H_b(B(x_0,r))$ and $x\in B(x_0,r)$, 
\begin{equation*} 
                     |h(x)-h(x_0)|\le C\|h\|_\infty \biggl(\frac{L(|x-x_0|)} {L(r) }\biggr)^{-\b} .
\end{equation*} 
\end{theorem}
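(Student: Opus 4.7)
The plan is to reduce to Theorem~\ref{hoelder} by choosing the intrinsic scale $\rho_0$ so that its conclusion (HC) takes the stated form. Fix $x_0\in B_R$ and set $\rho_0(x):=1/L(|x-x_0|)$, with $\rho_0(x_0):=0$, on the open neighborhood $X_0:=B_{2R_0}$ of $x_0$. Since $L$ is continuous and strictly decreasing on $(0,R_0]$ with $L(0)=\infty$ and $L(R_0)=0$, $\rho_0$ is continuous with $\rho_0(x_0)=0$; and for $R_0':=1/L(R)$ and $r'\in(0,R_0')$, the sublevel set $U_{r'}$ is the Euclidean ball $B(x_0,s(r'))$ with $L(s(r'))=1/r'$ and $s(r')<R$. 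With this choice the bound in (HC) reads $|h(x)-h(x_0)|\le C\|h\|_\infty(\rho_0(x)/r')^\b = C\|h\|_\infty(L(|x-x_0|)/L(s(r')))^{-\b}$, which is the asserted bound. So it remains to verify (J$_1$) and (J$_2$) with constants depending only on $d, K_0, c_0, c_1, c_2, c_3$.

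Two basic ingredients support both verifications, each a consequence of (D) and (LS). First, (L$_1$) and (L$_2$) force a doubling of $L$: $L(t/2)\le CL(t)$ for $0<t<R$, since $\int_{t/2}^t u\inv l(u)\,du\le (4/t^2)\int_0^t ul(u)\,du=4\tilde L(t)\le 4c_2L(t)$. Second, applying (D) to smooth cutoffs adapted to $B(x_0,t)$ and bounding the generator via (K), (L$_0$), (L$_1$), (L$_2$), (L$_3$) (the last two handling the ``diffusive'' part and the tail on $|h|\ge R_0$) yields $\mathbbm E^x[\tau_{B(x_0,t)}]\asymp 1/L(t)$ for $x\in B(x_0,t/2)$, together with $\int_{|h|>t}K(y,h)\,dh\le CL(t)$ uniformly in $y$, for $0<t<R$.

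Property (J$_2$) is immediate. With $s_n:=L\inv(\a_0^{-n}L(s))$ and $\a_0$ chosen small enough that $s_n\le s/2$ for all $n\ge 1$ (by doubling), (LS) gives, for $x\in B(x_0,s_{n+1})$,
\begin{equation*}
\mu_x^{B(x_0,s_n)}(B(x_0,s)^c) \le \sup_{y\in B(x_0,s_n)}\int_{B(x_0,s)^c}K(y,z-y)\,dz\cdot \mathbbm E^x[\tau_{B(x_0,s_n)}] \le C'L(s)/L(s_n) = C'\a_0^n.
\end{equation*}

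For (J$_1$), the naive Lebesgue-volume split of $A$ fails because $l(s)/L(s)$ can tend to zero (slowly varying kernels); the fix is to split by the \emph{weighted} volume matching the density of the (LS)-exit measure on the annulus. Set $s_i:=L\inv(\a^{-i}L(s))$ for $i=1,2$ with $\a$ small enough (depending on the doubling constant) that $B(x_0,s_2)\subset B(x_0,s_1/4)$, and define $w(z):=|z-x_0|^{-d}l(|z-x_0|)\mathbbm 1_{B(x_0,s)\sms B(x_0,s_1)}(z)$, so $\int w=|S^{d-1}|(L(s_1)-L(s))$. For universally measurable $A\subset B(x_0,s)$, either $\int_A w\ge \tfrac12\int w$ or $\int_{B(x_0,s)\sms A}w\ge \tfrac12\int w$; assume the former. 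For $x\in B(x_0,s_2)$ and $X_u\in B(x_0,s_1/2)$, any $z\in\supp w$ satisfies $|z-X_u|\asymp|z-x_0|$, so (L$_0$) and (L$_1$) give $K(X_u,z-X_u)\ge c\,w(z)$. Combined with $\mathbbm E^x[\tau_{B(x_0,s_1/2)}]\gtrsim 1/L(s_1)$ (via doubling), (LS) applied on $\tau_{B(x_0,s_1/2)}\le \tau_{B(x_0,s_1)}$ yields
\begin{equation*}
\mu_x^{B(x_0,s_1)}(A)\ge \mathbbm E^x[\tau_{B(x_0,s_1/2)}]\cdot c\int_A w \ge c'\frac{L(s_1)-L(s)}{L(s_1)}=c'(1-\a)\ge \delta_0.
\end{equation*}
The conceptual key here, and the main obstacle to overcome, is precisely the weighted-volume split: the total weighted mass $L(s_1)-L(s)\approx L(s_1)$ cancels the exit-time factor $1/L(s_1)$, producing a $\delta_0$ independent of how small $l(s)/L(s)$ might be. Theorem~\ref{hoelder} then yields the claim.
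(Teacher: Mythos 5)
Your proposal is correct and follows essentially the same route as the paper: set $\rho_0=1/L(|\cdot-x_0|)$ so that $V_s=U_{1/L(s)}$, derive $\mathbbm E^x\tau_s\asymp L(s)^{-1}$ together with the L\'evy-system upper and lower exit bounds, verify (J$_1$) and (J$_2$), and invoke Theorem~\ref{hoelder}. The only differences are cosmetic: the paper performs the pigeonhole split for (J$_1$) with the density $k/L$ (total mass $\kappa_d\ln(1/\a)$ on the annulus) rather than with $k$ itself, and it first passes to a closed set $F$ with at least a third of the mass so that (LS), stated only for Borel sets, applies to universally measurable $A$ --- a small step you should add.
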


The proof of our claim will be based on the next proposition 
which essentially consists of  rearranged results from  \cite[Section 6]{kassmann-mimica-final}.

Let $x_0\in B_R$ and, for every $r>0$, 
\[
                       V_r:= B(x_0,r)  \und \tau_r:=\inf\{u\ge 0\colon X_u\notin V_r\}.
\]
Moreover, let $\kappa_d$ denote the surface measure of~$B_1$, 
let $k(u):=u^{-d} l(u)$
and let $\mu$ be the measure on~$V_{R_0}$
having  density $  k(|x-x_0|)/L(|x-x_0|)$ with respect to Lebesgue measure.

\begin{proposition}\label{6-km}
There 
are $C_1, C_2, C_3\in (1,\infty)$ such that the following holds.
\begin{itemize} 
\item[\rm (1)]
Let $0<r<R$ and $x\in V_r$. Then
\begin{equation}\label{tau-upper}
E^x\tau_r\le C_1 L(r)\inv. 
\end{equation} 
If $r<s<R$ and $a:=L(r)/L(s)$,
then, for every   Borel set~$A$ in~$S:=V_s\setminus V_r$,
 \begin{equation}\label{j2}
     \mathbbm  P^x[X_{\tau_r}\in A  ]\ge  
        C_1\inv  \frac {\ln a}a\,\frac{ \mu(A)} {\mu(S)}\, L(r) E^x\tau_r.
\end{equation} 
\item[\rm (2)]
If $r,s\in (0,R)$ such that $r\le s/2$, then, for every $x\in V_r$, 
\begin{equation*} 
  \mathbbm P^x[X_{\tau_r}\notin V_s]\le C_2\, L(s) E^x\tau_r.
\end{equation*} 
\item[\rm (3)]
If   $0<r<R$ and $x\in V_{r/2}$,  then 
$ E^{x}\tau_r\ge  C_3\inv L(r)\inv$.
\end{itemize}
\end{proposition}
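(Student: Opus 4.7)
The plan is to extract (1) and (3) from Dynkin's formula (D) applied to well-chosen $C_c^\infty$ test functions, and to extract the lower bound in (1) and the upper bound in (2) directly from the L\'evy system formula (LS). The driving heuristic is that (L$_0$) makes the total intensity of jumps of size at least $r$ comparable to $L(r)$, so the exit time from $V_r$ scales like $1/L(r)$, while the landing distribution on $V_r^c$ is governed by the kernel $K(X_{\tau_r-},\cdot-X_{\tau_r-})\,dz$.

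For the upper bound in (1), I would take a smooth $\varphi\ge 0$ with $\varphi\equiv 0$ on $V_r$, $\varphi\equiv 1$ on an annulus extending from $V_{2r}$ out to the natural outer scale, $\|\varphi\|_\infty=1$, and use (L$_0$), (L$_1$), (L$_3$) to force the nonlocal generator
\[
\mathcal{L}\varphi(y)=\int\varphi(y+h)K(y,h)\,dh
\]
to satisfy $\mathcal{L}\varphi(y)\ge c\,L(r)$ for all $y\in V_r$; then (D) yields $E^x\tau_r\le C_1/L(r)$ after sending $t\to\infty$. For the lower bound on $\mathbbm{P}^x[X_{\tau_r}\in A]$ I would apply (LS):
\[
\mathbbm{P}^x[X_{\tau_r}\in A]=\mathbbm{E}^x\int_0^{\tau_r}\int_A K(X_u,z-X_u)\,dz\,du.
\]
For $X_u\in V_r$ and $z\in A\subset S$ the triangle inequality gives $|z-X_u|\le 2|z-x_0|$, and both halves of (L$_1$) combined with (L$_0$) then yield $K(X_u,z-X_u)\ge c\,k(|z-x_0|)$. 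Since the density of $\mu$ is $k(|z-x_0|)/L(|z-x_0|)$ and $L(|z-x_0|)\ge L(s)=L(r)/a$ on $A$, we have $\int_A k(|z-x_0|)\,dz\ge L(s)\mu(A)$; the identity $\mu(S)=\kappa_d\ln a$, obtained from polar coordinates and $L'(u)=-l(u)/u$, then rearranges the estimate into (\ref{j2}).

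For (2), (LS) gives the same formula with $A$ replaced by $V_s^c$. Since $r\le s/2$ forces $|z-X_u|\ge s/2$ whenever $X_u\in V_r$, splitting $dz$ at $|h|=R_0$ yields
\[
\int_{V_s^c}K(X_u,z-X_u)\,dz\le c_0\kappa_d L(s/2)+(1\vee R_0^{-2})K_0,
\]
which (L$_1$), (L$_3$) absorb into $C_2 L(s)$. For (3) I would take $\varphi\in C_c^\infty$ with $0\le\varphi\le 1$, $\varphi\equiv 1$ on $V_{r/2}$, $\supp\varphi\subset V_r$, and $\|D^2\varphi\|_\infty\le Cr^{-2}$. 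The symmetry $K(y,h)=K(y,-h)$ kills the first-order Taylor term, so the small-jump contribution $|h|<r/4$ to $\mathcal{L}\varphi$ is bounded by $Cr^{-2}\int_{|h|<r/4}|h|^2K(y,h)\,dh\le C\tilde L(r/4)\le CL(r)$ via (L$_2$); the large-jump contribution is $\le C(L(r/4)+(1\vee R_0^{-2})K_0)\le CL(r)$ via (L$_1$), (L$_3$), (K). Inserting $|\mathcal{L}\varphi|\le CL(r)$ into (D) with $\varphi(x)=1$ and $\varphi(X_{\tau_r})=0$ (since $\supp\varphi\subset V_r$) gives $1\le\mathbbm{P}^x[\tau_r>t]+CL(r)E^x(\tau_r\wedge t)$; part (1) makes $\tau_r$ integrable, so letting $t\to\infty$ yields (3).

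The main obstacle will be the bookkeeping of the comparisons $L(\alpha r)\le CL(r)$ and $\tilde L(r)\le CL(r)$ across the admissible range $0<r<R$ with constants depending only on the data: every splitting of $h$ produces an integral one must turn into a multiple of $L$ at the appropriate scale, and (L$_3$) is precisely what guarantees uniformity up to the outer scale $R$ where (K) injects the stray $(1\vee R_0^{-2})K_0$ coming from $\{|h|\ge R_0\}$. The symmetry $K(y,h)=K(y,-h)$ is also essential for the second-order small-jump estimate in (3); without it $\int_{|h|<r}|h|K(y,h)\,dh$ need not even converge.
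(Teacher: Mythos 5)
Your handling of the hitting estimate (\ref{j2}), of part (2), and of part (3) coincides in all essentials with the paper's own proof: for (\ref{j2}), (LS) plus the comparison $K(X_u,z-X_u)\ge c\,k(|z-x_0|)$ and the identity $\mu(S)=\kappa_d\ln a$; for (2), (LS) plus a tail bound split at the outer scale (your split at $|h|=R_0$ with $(1\vee R_0^{-2})K_0\le(1\vee R^{-2})K_0$ is as good as the paper's split at $1\wedge R$); for (3), (D) with the symmetry of $K(y,\cdot)$, a second-order Taylor expansion of the test function and (L$_2$), your bump supported in $V_r$ being an equally valid choice to the paper's $\psi(|y-x_0|/r)$.

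The genuine gap is the upper bound $\mathbbm E^x\tau_r\le C_1L(r)\inv$. You derive it from (D) with a smooth $\varphi$ vanishing on $V_r$ and equal to $1$ on an annulus starting at $V_{2r}$, which requires $\int\varphi(y+h)K(y,h)\,dh\ge c\,L(r)$ for all $y\in V_r$ and hence, in effect, $L(2r)\ge c\,L(r)$. The doubling supplied by (L$_1$) and (L$_3$) (Lemma \ref{L-doubling}) gives $L(\rho)\le C_4L(2\rho)$ only for $2\rho<R$; for $r\in[R/2,R)$ there is no control on $L(2r)$ --- indeed $2r$ may exceed $R_0$, in which case your annulus is empty --- and shrinking the transition layer to $[r,(1+\ve)r]$ still requires a uniform bound $\int_r^{(1+\ve)r}u\inv l(u)\,du\le\tfrac12L(r)$, which does not come for free from (L$_0$)--(L$_3$) near the outer scale (the paper deliberately does not assume the weak scaling condition $(l_2)$ here). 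The paper sidesteps all of this by reading the upper bound off (LS) rather than (D): take $A=V_{R_0}\setminus V_r$, a Borel subset of $V_r^c$ to which (LS) applies verbatim, and use nothing more than $\mathbbm P^x[X_{\tau_r\wedge t}\in A]\le 1$ together with the exact identity $\int_{V_{R_0}\setminus V_r}k(|z-x_0|)\,dz=\kappa_d L(r)$ to get $\mathbbm E^x(\tau_r\wedge t)\,\kappa_d L(r)\le 2^dc_0c_1^2$, with no smoothing, no inner gap at $2r$ and no outer truncation. This is precisely why the paper remarks that the upper bound on $\mathbbm E^x\tau_r$ is obtained for free from the proof of the hitting estimate. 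Replace your test-function argument for this one inequality by that computation; the rest of your plan goes through.
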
 

Let us note that  (1)   states what can be obtained from  the proof of
\cite[Proposition~17]{kassmann-mimica-final}, 
which is based on (LS).
Hence a separate proof of the first part of \cite[Proposition~15]{kassmann-mimica-final}, giving
an upper estimate of~$\mathbbm E^x\tau_r$ by a multiple of  $ L(r)\inv$, is not needed. 
Moreover,   (2) is \cite[Proposition 16]{kassmann-mimica-final} 
(having a proof using (LS) and the upper estimate for~$\mathbbm E^x\tau_r$).
Finally, a~modification of the proof (the only one using (L$_2$) and
(D)) given in \cite[Proposition 14]{kassmann-mimica-final}  for 
an upper estimate of $t\inv \mathbbm P[\tau_r\le t]$ 
by a~multiple of $L(r)$  directly yields  (3)
so that also the second part of \cite[Proposition~15]{kassmann-mimica-final} is obtained
without an additional proof. 

Because of these   simplifications 
let us write down a complete proof for Proposition \ref{6-km}.
We first establish two simple facts 
which are  repeatedly used also in~\cite{kassmann-mimica-final}.

\begin{lemma}\label{L-doubling} 
Let $C_4:=1+c_1+c_3$. Then $L(r/2)\le C_4 L(r)$ for every $0<r<R$.
\end{lemma}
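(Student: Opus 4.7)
The plan is to write $L(r/2)=\int_{r/2}^{R_0} u\inv l(u)\,du$ as $L(r)$ plus a remainder, and to estimate that remainder via the doubling property in~(L$_1$), exploiting the substitution $v=2u$ which converts an integral of $l(2u)$ into an ordinary tail integral of $l$. The first part of~(L$_1$), rewritten with $r=2u$, gives $l(u)\le c_1 l(2u)$ whenever $2u<R$, so the substitution is admissible precisely when the transformed interval stays inside $[0,R]$. This forces a case split at $r=R/2$.

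For the \emph{easy case} $r\le R/2$, I would write
$L(r/2)=\int_{r/2}^r u\inv l(u)\,du+L(r).$
Since $2u\le R$ throughout $[r/2,r]$, the doubling estimate yields
$\int_{r/2}^r u\inv l(u)\,du\le c_1\int_{r/2}^r u\inv l(2u)\,du,$
and the substitution $v=2u$ turns the right-hand side into $c_1\int_r^{2r} v\inv l(v)\,dv$, which is bounded by $c_1 L(r)$ because $2r\le R<R_0$. Thus $L(r/2)\le(1+c_1)L(r)$.

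For the \emph{harder case} $R/2<r<R$, a naive substitution would push the integration past~$R$, where~(L$_1$) is no longer available; instead I would split
$L(r/2)=\int_{r/2}^{R/2} u\inv l(u)\,du+L(R/2).$
On $[r/2,R/2]$ the doubling estimate still applies ($2u\le R$), and the substitution $v=2u$ yields $c_1\int_r^R v\inv l(v)\,dv\le c_1 L(r)$. The remaining term $L(R/2)$ is exactly what~(L$_3$) was designed to control: it gives $L(R/2)\le c_3 L(R)\le c_3 L(r)$, the last inequality from monotonicity of~$L$ together with $r<R$. Combining yields $L(r/2)\le(c_1+c_3)L(r)$, and the two cases together give the claimed bound with $C_4=1+c_1+c_3$. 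The only real subtlety is recognizing that~(L$_3$) is tailored precisely to the regime $r\in(R/2,R)$ where the substitution trick alone is insufficient; once the case split is made, the computation is routine.
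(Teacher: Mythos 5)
Your proof is correct and follows essentially the same route as the paper: the same split at $r=R/2$, the same doubling-plus-substitution estimate $\int u\inv l(u)\,du\le c_1\int_{2\cdot} v\inv l(v)\,dv$ from (L$_1$) on the near range, and (L$_3$) together with the monotonicity of $L$ on the range $R/2<r<R$. If anything, your handling of the second case (decomposing $L(r/2)$ at $R/2$ to get the bound $(c_1+c_3)L(r)$) is written out more completely than the paper's one-line remark for that case.
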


\begin{proof}  Let us first consider $0<r\le R/2$.
Then $L(r/2)=L(r)+I_r$, where, by~(L$_1$), 
\[
I_r= \int_{r/2}^{r} u\inv l(u)\,du\le 
 c_1\int_{r/2}^{r} u\inv l(2u)\,du= c_1 \int_{r}^{2r} v\inv l(v)\,dv\le c_1 L(r).
 \]
If $R/2< r<R$, then $L(r)< L(R/2)\le c_3 L(R) < c_3 L(r)$, by (L$_3$).
\end{proof}  

\begin{lemma}\label{int-R2}
For all $x\in U_0$ and $0<r<R$, $\int_{\{|h|>1\wedge  R\}} K(x,h)\,dh\le c_3 L(r)$.
\end{lemma}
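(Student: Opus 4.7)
The plan is to exploit the global bound (K) on $\int (1\wedge |h|^2)K(x,h)\,dh$ by producing a pointwise lower bound of $(1\wedge|h|^2)$ on the integration region $\{|h|>1\wedge R\}$, and then absorb the resulting constant into $L(r)$ via~(L$_3$) and the monotonicity of $L$.

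The key observation I would record first is that on $\{|h|>1\wedge R\}$ one has
\[
1\wedge |h|^2 \;\ge\; (1\wedge R)^2.
\]
Indeed, if $|h|\ge 1$ the left side equals~$1$, which dominates $(1\wedge R)^2\le 1$; and if $1\wedge R<|h|<1$ (which forces $R<1$, so $1\wedge R=R$), then $1\wedge |h|^2=|h|^2>R^2=(1\wedge R)^2$. Multiplying by $K(x,h)\ge 0$ and integrating over $\{|h|>1\wedge R\}$, (K) immediately yields
\[
(1\wedge R)^2 \int_{\{|h|>1\wedge R\}} K(x,h)\,dh \;\le\; \int_{\reald}(1\wedge |h|^2) K(x,h)\,dh \;\le\; K_0.
\]

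Rewriting $(1\wedge R)^{-2}=1\vee R^{-2}$ and invoking (L$_3$), this gives
\[
\int_{\{|h|>1\wedge R\}} K(x,h)\,dh \;\le\; (1\vee R^{-2}) K_0 \;\le\; c_3 L(R).
\]
Since $L$ is strictly decreasing on $[0,R_0]$ (as already noted after the statement of the hypotheses) and $r<R$, we conclude $c_3 L(R)\le c_3 L(r)$, which finishes the proof.

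There is essentially no obstacle here: the only minor point requiring care is the case split hidden inside the pointwise bound $1\wedge|h|^2\ge (1\wedge R)^2$, which covers both $R\ge 1$ (where the integration region is $\{|h|>1\}$ and the bound is trivial) and $R<1$ (where the extra factor $R^{-2}$ is needed and is supplied exactly by the $(1\vee R^{-2})K_0$ term in (L$_3$)). Everything else is a direct application of the assumptions.
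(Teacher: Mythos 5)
Your proof is correct and follows essentially the same route as the paper: the paper's one-line estimate $\int_{\{|h|>1\wedge R\}}K(x,h)\,dh\le (1\vee R^{-2})\int(1\wedge|h|^2)K(x,h)\,dh\le (1\vee R^{-2})K_0\le c_3L(R)$ is exactly your pointwise bound $1\wedge|h|^2\ge(1\wedge R)^2$ on the integration region combined with (K) and (L$_3$), followed by the monotonicity of $L$. You have merely made the case split behind that pointwise bound explicit.
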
 

\begin{proof} 
Let  $a:=1\vee R^{-2}$. 
By   (K) and  (L$_3$),   
\begin{equation*} 
\int_{\{|h|>1\wedge  R\}}     K(x,h) \,dh 
\le  a\int_{\reald} (1\wedge |h|^2)  K(x,h) \,dh \le a K_0\le c_3 L(R).
\end{equation*} 
for every $x\in U_0$. It remains to observe that $L(R)<L(r)$ for every  $r\in (0,R)$.
\end{proof} 

 \begin{proof}[Proof of Proposition \ref{6-km}.] 
(1) Let             $A$ be a Borel set in $V_r^c$.
If $y\in V_r$ and $z\in V_r^c$, then $|z-y|\le |z-x_0| +r\le 2|z-x_0|$,    hence
$ k( |z-x_0|)\le c_1 k(|z-y|/2)\le 2^dc_1^2 k(|z-y|)$,  by  (L$_1$).
So  (LS) implies that, for $t>0$,
\begin{eqnarray*} 
1&\ge &\mathbbm P^x[X_{\tau_r\wedge t}\in A]
  =  \mathbbm E^{x } \int_0^{ \tau_r\wedge t} \int_A   K(X_u, z-X_u)\,dz\,du\\
&\ge &   c_0\inv \mathbbm E^{x } \int_0^{ \tau_r\wedge t} \int_A   k(| z-X_u|)\,dz\,du
     \ge   (2^dc_0c_1^2)\inv \, \mathbbm E^x(\tau_r \wedge t)\int_A k(|z-x_0|) \, dz.
\end{eqnarray*} 
Since  $\int_{V_{R_0}\setminus V_r} k(|z-x_0|)\,dz=\kappa_d L(r)$, (\ref{tau-upper}) follows taking  $C_1:=2^d\kappa_d\inv c_0c_1^2$
and letting~$t$ tend to infinity.

Now let $r<s<R_0$ and  $a:=L(r)/ L(s) $. Then, 
\begin{equation}\label{muS}
 \kappa_d\inv \mu(S)=   \int _r^s  \frac{l(u)} {uL(u)}\,du=  - \ln L(u)|_r^s =\ln a,
\end{equation} 
and hence, if $A\subset S$,
\begin{equation*} 
 \int_A k(|z-x_0|)\,dz=\int_A L(|z-x_0|)\,d\mu(z)\ge L(s)\mu(A)\\
=  \kappa_d \, \frac{\ln a} a\, L(r)\,  \frac{\mu(A)}{\mu(S)}.
\end{equation*} 

(2) Let $0<2r\le s<R$. 
By (LS) (recall that $\tau_r<\infty$ $\mathbbm P^x$-a.s.\ by (1)), 
\[
        \mathbbm P^x[X_{\tau_r}\in V_s^c] =\mathbbm E^x\int_0^{\tau_r} \int_{V_s^c} K(X_u, z-X_u)\,dz\,du. 
\]
If $y\in V_r$, then $B(y, s/4) \subset V_{3s/4}$. Hence, by Lemmas \ref{L-doubling} and  \ref{int-R2},
\[
         \int_{V_s^c} K(y,z-y)\,dz\le   \int_{\{  s/4<|h|\}}  K(y,h)\,dh\le 
\kappa_d  c_0 L(s/4)+c_3\le   C_2 L(s),
\]
where $  C_2:=\kappa_d c_0C_4^2+c_3$. Thus $ \mathbbm P^x[X_{\tau_r }\in V_s^c] \le   C_2 L(s) \mathbbm E^x\tau_r$.

(3)
Let $\psi(u)\in \C^\infty(\real)$  
  such that $\psi(u)=u^2-2$
for every $u\in [-1,1]$, $\psi=0$ on~$\real\setminus [-2,2]$ and $-2\le \psi\le 0$.
 Let $0<r<  R$, $s:=1\wedge r$  and, for $y,h\in\reald$,
\[
f(y):=\psi(|y-x_0|/r) \und F(y,h):=(f(y+h)-f(y)) K(y,h).
\]
By (D), for every $x\in V_r$ and $t>0$,
\begin{equation}\label{ffr}
         \mathbbm  E^{x} f(X_{  \tau_r\wedge t}) -f(x)=
 \mathbbm E^{x}\int_0^{  \tau_r\wedge t}\int_{\reald} F(X_u,h) \,dh\,du. 
\end{equation} 
Let $y\in V_r$.
Since $  -2\le f\le 0$, we have $|F|\le 2 K$, and hence, by~(L$_0$) and Lemma~\ref{int-R2},
\begin{equation*} 
   \int_{\{s<|h|\}}  | F(y,h)|\,dh \le   2 \int_{\{r<|h|<R_0\}\cup \{ 1\wedge  R<|h|\}}   K(y,h)\,dh 
\le 2(\kappa_d c_0+c_3) L(r).
\end{equation*} 
 By (K$_0$), $K(y,h)=K(y,-h)$ for every $y\in B_1$. 
Hence, by   (L$_0$) and   (L$_2$), 
\begin{multline}\label{tilde-L}
\int_{\{|h|<s\}}  F(y,h) \,dh
=  \int_{\{|h|< s\}}  \bigl( f(y+h)-f(y)-\langle \nabla f(y),  h\rangle \bigr)   K(y,h)\,dh 
\\
    =  \int_{\{|h|< s\}}  \frac{|h|^2}{r^2}\, K(y,h)\, dh\le \kappa_d c_0
r^{-2} \int_0^r ul(u)\,du\le  \kappa_d c_0c_2 L(r).
\end{multline} 
Combining the preceding estimates we see that, defining $  C_3:=10\kappa_dc_0(1+c_2) +c_3$, 
\begin{equation}\label{int-reald}
\int_{\reald}    F(y,h)\,dh\le  (1/2) C_3L(r). 
\end{equation} 
    
Finally, let $x\in V_{r/2}$. Then, by  (\ref{ffr}) and (\ref{int-reald}), for every $t>0$, 
\begin{equation*} 
            \mathbbm  E^{x} f(X_{  \tau_r\wedge t})  -f(x) \le (1/2) C_3 L(r) \mathbbm E^{x}( \tau_r),
\end{equation*} 
where $ f(X_{\tau_r})-f(x)>1/2$ on $\{\tau_r<\infty\}$.  Letting $t\to \infty$ we hence see that 
\begin{equation*} 
       1/2<   \mathbbm E^{x} f(X_{ \tau_r}) -f(x)\le (1/2) C_3 L(r) \mathbbm E^{x}(\tau_r )
\end{equation*} 
completing the proof.
\end{proof} 

 \begin{remark}
{\rm
Suppose for a moment that instead of having (L$_2$), which is equivalent to $\limsup_{r\to 0} \tilde L(r)/L(r)<\infty$,
we would have $\lim_{r\to 0} \tilde L(r)/L(r)= \infty$,   the preceding proof could easily be modified 
(using the equalities in  (\ref  {tilde-L}) for $ r<1\wedge R$) to show that then $E^x\tau_r\approx \tilde L(r)\inv$ for   $ 0<r<R$
and $x\in V_{r/2}$. This is the case, if $l(u)=u^{-2}(\ln u\inv)^{-2}$ (see the end of Section \ref{examples}).
}
\end{remark}

To apply Theorem \ref{hoelder},  we define
\[
             \rho_0(x):=L(|x-x_0|)\inv, \qquad x\in X_0:=V_R.
\]
 Then, for every $0<r<R$,
\begin{equation}\label{VUr} 
 V_r=U_{L(r)\inv}.
\end{equation}

\begin{corollary}\label{J23} 
There exist $\a, a_0, \delta_0\in (0,1)$ and $C_0\ge 1$, which depend only on~$K_0,c_0,c_1,c_2,c_3$, satisfying {\rm (J$_1$)}  and {\rm (J$_2$)}.
\end{corollary}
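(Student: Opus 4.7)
My plan is to translate the abstract setup into the Euclidean one via the identification $U_\rho=V_r$ for $\rho=L(r)\inv$ (valid for $0<r<R$, that is for $0<\rho<L(R)\inv$). Under this identification, the chain $U_{\alpha^k\rho}$ corresponds to the balls $V_{r_k}$ with $L(r_k)=\alpha^{-k}L(r)$. Iterating Lemma \ref{L-doubling}, I would fix $\alpha\le C_4\inv$ once and for all, so that $r_{k+1}\le r_k/2$ throughout; this halving condition is precisely what parts (2) and (3) of Proposition \ref{6-km} require.

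For (J$_1$), the plan is to combine the jump-out estimate $\mu_x^{V_{r_n}}(V_r^c)\le C_2L(r)\,\mathbbm E^x\tau_{r_n}$ of Proposition \ref{6-km}(2), applicable because $r_n\le r/2$ for $n\ge 1$, with the exit-time bound $\mathbbm E^x\tau_{r_n}\le C_1L(r_n)\inv$ from Proposition \ref{6-km}(1). This yields $\mu_x^{V_{r_n}}(V_r^c)\le C_1C_2\alpha_0^n$ for $n\ge 1$ and $x\in V_{r_n}$, while the case $n=0$ is trivial, since $\mu_x^{V_r}(V_r^c)=1$. Thus (J$_2$) holds with $\alpha_0:=\min(\alpha,C_4\inv)$, $a_0:=\alpha_0$ and $C_0:=C_1C_2\vee 1$.

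For (J$_1$), writing $s_1,s_2$ for the radii with $L(s_i)=\alpha^{-i}L(r)$, the condition $\alpha\le C_4\inv$ guarantees $s_2\le s_1/2$, so Proposition \ref{6-km}(3) gives $\mathbbm E^x\tau_{s_1}\ge C_3\inv L(s_1)\inv$ for every $x\in V_{s_2}$. Feeding this into the lower Poisson-kernel inequality (j2) of Proposition \ref{6-km}(1), taken with its ``$r$'' set equal to $s_1$ and its ``$s$'' equal to $r$ (so that $a=1/\alpha$), I obtain, for every Borel set $B\subset V_r\setminus V_{s_1}$ and every $x\in V_{s_2}$,
\[
\mu_x^{V_{s_1}}(B)\ge (C_1C_3)\inv \,\alpha\ln(1/\alpha)\,\frac{\mu(B)}{\mu(V_r\setminus V_{s_1})}.
\]
Given now a universally measurable $A\subset V_r$, I split the annulus $V_r\setminus V_{s_1}$ into the two pieces $A\cap(V_r\setminus V_{s_1})$ and $(V_r\setminus A)\cap(V_r\setminus V_{s_1})$; at least one of them has $\mu$-measure at least half of $\mu(V_r\setminus V_{s_1})$, and, since $\mu_x^{V_{s_1}}$ is supported in $V_{s_1}^c$ while $A\subset V_r$, the mass $\mu_x^{V_{s_1}}(A)$ equals $\mu_x^{V_{s_1}}(A\cap(V_r\setminus V_{s_1}))$ (and analogously for $V_r\setminus A$). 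This produces the dichotomy required by (J$_1$), with $\delta_0:=\alpha\ln(1/\alpha)/(2C_1C_3)$.

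The one slightly delicate point, as I see it, is a minor measurability issue: the inequality in Proposition \ref{6-km}(1) is stated for Borel subsets of the annulus, whereas (J$_1$) asks for universally measurable ones. I would dispose of this by inner-approximating $A\cap(V_r\setminus V_{s_1})$ from below by a Borel set of the same $\mu$-measure, which is possible because $\mu$ is absolutely continuous with respect to Lebesgue measure. What remains is routine bookkeeping to check that every constant that enters only depends on $K_0,c_0,c_1,c_2,c_3$ through the derived constants $C_1,C_2,C_3,C_4$.
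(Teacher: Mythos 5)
Your argument is correct and follows essentially the same route as the paper: fix $\a=C_4\inv$ so that consecutive radii halve, combine Proposition \ref{6-km}(1) and (2) to get (J$_2$), and combine (\ref{j2}) with Proposition \ref{6-km}(3) plus an inner approximation of the universally measurable set by a Borel (the paper uses a closed) subset of full $\mu$-measure to get (J$_1$). Only cosmetic blemishes: the opening of your second paragraph should read ``For (J$_2$)'' rather than ``For (J$_1$)'', and since (J$_1$) demands a strict inequality you should take $\delta_0$ strictly smaller than $\a\ln(1/\a)/(2C_1C_3)$ (the paper achieves this by choosing the closed set with $\mu(F)>\mu(S)/3$).
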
 

\begin{proof}
If $r,s\in (0,R)$, then
\begin{equation}\label{rs2}
            r<s/2 \quad \mbox{ provided } \quad L( r)> C_4 L(s), 
\end{equation} 
since $L$ is strictly decreasing and $  C_4 L(s)\ge L(s/2)$, by Lemma \ref{L-doubling}. 
We define
\begin{equation*} 
\a:=a_0:=C_4\inv,\qquad \delta_0:=(3 C_1C_3C_4)\inv, \qquad C_0:=C_1C_2.
\end{equation*} 
Now we fix  $0<t<L(R)\inv$. Given    $0<\g \le \a$, there are  $s,r \in (0,R)$ such that 
\begin{equation*} 
t=L( s)\inv  \und \g t=L(r)\inv .
\end{equation*} 
Then $  U_{\g t}=V_r\subset V_{s/2}$, by (\ref{rs2}).
By Proposition~\ref{6-km}, (1) and (2),  for every $x\in U_{\g t}$,
\[
             \mu_x^{U_{\g t}}(U_t^c)=\mu_x^{V_r}(V_s^c)=\mathbbm P^x[X_{\tau_r}\notin V_s] \le C_1C_2\,  {L(s)}/{L(r)}=C_1C_2 \g.
\]
So  (J$_2$) holds.

To prove (J$_1$) let  $\g=\a$. By (\ref{rs2}),  $U_{\a^2 t}\subset V_r$ (consider $\a t$ instead of $t$).
Finally, suppose that $A$ is a universally measurable set in $S:=U_t\setminus U_{\a t}=V_s\setminus V_r$
 and let~$\mu$ be as in Proposition \ref{6-km},1.
Then there exists a closed set~$F$ contained in~$A$ or in~$S\setminus A$ such that 
$\mu(F)> \mu(S)/3$.  
Since $L(s)=\a L(r)$,  Proposition \ref{6-km} shows  that, for  every $x\in  U_{\a^2t}\subset  V_{r/2}$,
\begin{equation*} 
      \mu_x^{U_{\a t}}(F)  = \mathbbm P^x[X_{\tau_r}\in F]\ge  ( C_1C_3)\inv  \frac {\ln (\a\inv )}{\a\inv} \frac{\mu(F)}{\mu(S)}    > \delta_0.
\end{equation*} 
 \end{proof} 

\begin{proof}[Proof of Theorem \ref{hoelder-levy}]  Let  $0<r<R$, $x\in V_r$,  and $h\in \H_b(B_r)$.
By  Corollary~\ref{J23}, Theorem \ref{hoelder} and (\ref{VUr}), 
\[
|h(x)-h(x_0)|\le C\|h\|_\infty (\rho_0(x)/L(r)\inv )^{\b} =
 C\|h\|_\infty L(r)^\b L(|x-x_0|)^{-\b}.
\]
\end{proof}

\begin{corollary}\label{km-corollary}  
Let $0<r<R$. Then, for all  $h\in \H_b(B_r)$ and  $x,y\in B_{r/3}$,
\begin{equation}\label{hoelder-kassmann-formula}
                   |h(x)-h(y)|\le c_1  C\|h\|_\infty L(r)^\b L(|x-y|)^{-\b}.
\end{equation} 
\end{corollary}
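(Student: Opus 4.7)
The plan is to apply Theorem \ref{hoelder-levy} after shifting the center from $0$ to $y$. Fix $x, y \in B_{r/3}$. Since $|y| < r/3$, we have $B(y, 2r/3) \subset B_r$, so $h \in \H_b(B(y, 2r/3))$; and $|x - y| \le |x| + |y| < 2r/3$ gives $x \in B(y, 2r/3)$. As $y \in B_R$, Theorem \ref{hoelder-levy} applied with center $y$ and radius $2r/3$ yields
\[
|h(x) - h(y)| \le C \|h\|_\infty L(2r/3)^\b L(|x - y|)^{-\b}.
\]

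It then remains to replace $L(2r/3)$ by a bounded multiple of $L(r)$. Since $L$ is strictly decreasing and $2r/3 > r/2$, Lemma \ref{L-doubling} gives $L(2r/3) \le L(r/2) \le C_4 L(r)$ for $0 < r < R$ (with $C_4 = 1 + c_1 + c_3$). Hence $L(2r/3)^\b \le C_4^\b L(r)^\b$, and the bound becomes
\[
|h(x) - h(y)| \le C_4^\b C \|h\|_\infty L(r)^\b L(|x-y|)^{-\b}.
\]
After enlarging $C$ (which already depends on $K_0, c_0, c_1, c_2, c_3$) to absorb the bounded factor $C_4^\b / c_1$, one obtains the claimed inequality with constant $c_1 C$.

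The proof is essentially a one-step reduction; the only substantive observation is that the largest ball around any $y \in B_{r/3}$ still contained in $B_r$ has radius at least $2r/3$, which is just enough to contain any other $x \in B_{r/3}$ and so permit a direct invocation of Theorem \ref{hoelder-levy}. There is no real obstacle: the doubling of $L$ at the end is a routine consequence of Lemma \ref{L-doubling}, and tracking the constant through this single step is straightforward.
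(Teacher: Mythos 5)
Your proof is correct and follows essentially the same route as the paper: apply Theorem \ref{hoelder-levy} centered at $y$ with radius $2r/3$, using $B(y,2r/3)\subset B_r$, and then control $L(2r/3)$ via $L(2r/3)\le L(r/2)$ and the doubling Lemma \ref{L-doubling}. The only (cosmetic) divergence is in the final constant: the paper asserts $L(r/2)\le c_1L(r)$ to land exactly on $c_1C$, whereas you use the bound $C_4L(r)$ that Lemma \ref{L-doubling} actually provides and then absorb $C_4^\b$ into the constant, which is a legitimate and arguably more careful bookkeeping of the same step.
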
 

\begin{proof} Let $x,y\in B_{r/3}$. Then  $x\in B(y,2r/3)\subset B_r$, and hence, by Theorem \ref{hoelder-levy}, 
\begin{equation*} 
                  |h(x)-h(y)|\le C   \|h\|_\infty L(2r/3)^\b L(|x-y|)^{-\b},
\end{equation*} 
where $L(2r/3)\le L(r/2)\le c_1 L(r)$.
\end{proof}

 \section{Examples}\label{examples}

  Our assumptions in Section \ref{application} are satisfied   under the main assumptions made in \cite{kassmann-mimica-final},
and hence Corollary \ref{km-corollary} implies the statement of \cite[Theorem 3]{kassmann-mimica-final} in the case $f=0$.

Indeed, our (K) and (L$_0$) are localized versions of~(A$_1$) and~(K$_0$) (which, incidentally, imply $(l_1)$).  
The second inequality in (L$_1$) amounts to~$(l_3)$ (with $R$ in place of~$R_0$). 
Property~$(l_2)$ means that $l(v)/l(u)\ge c_L (v/u)^{-\g} $ for all $0<u\le v<R_0$.
In~particular, it leads to $l(r/2)\le 2^\g c_L\inv l(r)$ for all $0<r<R_0$. 
Moreover, it also implies that, for some $c>0$, 
\begin{equation}\label{L-crit}
\int_0^r u l(u)\,du\le c  r^2 L(r) , \qquad 0<r<R,
\end{equation} 
 a~fact which  is   part of the proof of  \cite[(8)]{kassmann-mimica-final} without having been stated separately.
Indeed,  suppose that $(l_2)$ holds,
 let $a:= 1-(R/R_0)^\g$ (so that $a=1$ if~$R_0=\infty$) and  $0<r<R$.
 Then (see the proof of \cite[Lemma 7]{kassmann-mimica-final}),
\begin{eqnarray*} 
 \frac{L(r)}{l(r)}=\int_r^{R_0} u\inv\, \frac {l(u)}{l(r)}  \,du
 &\ge&  c_L r^\g\int_r^{R_0} u^{-1-\g}\,du\\
&= &\g\inv c_L\bigl( 1-(r/R_0)^\g) \ge \g\inv c_L a.
\end{eqnarray*} 
Further (see the proof of \cite[(8)]{kassmann-mimica-final}),
\begin{equation*} 
    \frac1{l(r)}   \int_0^r u l(u)\,du\le c_L\inv r^{\g} \int_0^r  u^{1-\g}\,du=(2-\g)\inv c_L\inv r^2.
\end{equation*} 
Thus (\ref{L-crit}) holds with $c:= a\inv c_L^{-2} \g/(2-\g)$. 

Finally, for  given $l$ and $R$, (L$_3$) is no problem. 

  Our assumptions are satisfied as well in the second part of  \cite{kassmann-mimica-final} 
(beginning with Section 5), where $l$ is assumed to be locally bounded and to vary regularly
at zero with index $-\a\in (-2,0]$. Implicitly, this has already been used in Section 6 of that paper 
and based on considerations in its Appendix. Thus Corollary \ref{km-corollary} also implies \cite[Theorem 12]{kassmann-mimica-final}.

We note, however, that in the case $l(u)=u^{-2} (\ln u\inv)^{-2}$ property (L$_2$) does not hold,
since an easy calculation shows that $L\approx l$, whereas $r^{-2} \int_0^r u l(u)\,du \approx r^{-2} (\ln r\inv)\inv
= l(r) \ln r\inv$.

\bibliographystyle{plain} 
\def\cprime{$'$} \def\cprime{$'$}

{\small \noindent 
Wolfhard Hansen,
Fakult\"at f\"ur Mathematik,
Universit\"at Bielefeld,
33501 Bielefeld, Germany, e-mail:
 hansen$@$math.uni-bielefeld.de}

\end{document}